\definecolor {processblue}{cmyk}{0.96,0,0,0}
\theoremstyle{definition}
\algrenewcommand\alglinenumber[1]{
    {\sf\footnotesize\addfontfeatures{Colour=888888,Numbers=Monospaced}#1}}
\algrenewcommand\algorithmicrequire{\textbf{Precondition:}}
\algrenewcommand\algorithmicensure{\textbf{Postcondition:}}
\mathchardef\mhyphen="2D
\renewcommand{\algorithmicrequire}{\textbf{Input:}}
\newcommand*\samethanks[1][\value{footnote}]{\footnotemark[#1]}
\newcommand{\quotes}[1]{``#1''}
\newtheorem{definition}{Definition}
\newtheorem{remark}{Remark}
\newcommand{\BA}{\begin{array}}
\newcommand{\EA}{\end{array}}
\newcommand{\argmin}{\mathop{\rm argmin}}
\newtheorem{theorem}{Theorem}[section]
\newtheorem{lemma}[theorem]{Lemma}
\long\def\@makecaption#1#2{
   \vskip 9pt
   \begin{small}
   \setbox\@tempboxa\hbox{{\bf #1:} #2}
   \ifdim \wd\@tempboxa > 5.5in
        \begin{center}
        \begin{minipage}[t]{5.5in}
        \addtolength{\baselineskip}{-0.95pt}
        {\bf #1:} #2 \par
        \addtolength{\baselineskip}{0.95pt}
        \end{minipage}
        \end{center}
   \else
    \hbox to\hsize{\hfil\box\@tempboxa\hfil}
   \fi
   \end{small}\par
}
\newcounter{oursection}
\newcounter{lecture}
\title{Anti-lopsided Algorithm for Large-scale Non-negative Least Squares}
\author{Duy-Khuong Nguyen
		~\thanks{Japan Advanced Institute of Science and Technology, Japan}
		~\thanks{University of Engineering and Technology, Vietnam National University, Hanoi, Vietnam}
	\and Tu-Bao Ho
		~\samethanks[1]
		~\thanks{John von Neumann Institute, Vietnam National University, Ho Chi Minh City, Vietnam}}
\begin{document}
\maketitle

\begin{abstract}
Non-negative least squares~(NNLS)  is one of the most fundamental problems in numeric analysis. It has been widely used in scientific computation and data modeling. 
When applying to huge and complex data in big data analytics, the limitations of NNLS algorithm speed and accuracy remain as key challenges. In this paper, we develop a fast and robust anti-lopsided algorithm for NNLS with high accuracy that is totally based on the first order methods. 
The main idea of the proposed algorithm is to transform the original NNLS into an equivalent non-negative quadratic programming, which significantly reduces the scaling problem of variables to discover more effective gradient directions.  The proposed algorithm reaches high accuracy and fast speed  with linear convergence $(1 - \frac{1}{2||Q||_2})^k$ in the subspace of passive variables where $\sqrt{n} \leq ||Q||_2 \leq n$, and $n$ is the dimension size of solutions. The experiments on large matrices clearly show the high performance of the proposed algorithm in comparison to the state-of-the-art algorithms.

\vspace{0.5cm}
\textbf{Keywords}: Non-negative least squares, Large-scale anti-lopsided algorithm, First order methods.
\end{abstract}

\clearpage
\tableofcontents
\clearpage

\section{Introduction}

Non-negative least squares~(NNLS) problem that aims to minimize the sum of squares of the errors is one of the most fundamental problems in numeric analysis. It has been widely used in scientific computation and data mining to approximate observations~\cite{Chen2009Non}. Particularly, in many fields such as image processing, computer vision, text mining, environmetrics, chemometrics, or speech recognition, we often need to estimate a large number of observations $b \in \mathcal{R}^{d}$ by a set of measurements or basis factors $\{A_i\}$ contained in a matrix $A \in \mathcal{R}^{d \times n}$, where the popular task is to approximate $b$ by minimizing $\frac{1}{2}||Ax - b||^2_2$. Moreover, in comparison to least squares (LS), NNLS has more concisely  interpretable solutions,
of which non-negative coefficients $x \in \mathcal{R}_+^n$ can be interpreted as contributions of the measurements over the observations. In the contrast, mixed-sign coefficients of LS solutions are uninterpretable because they lead to overlapping and mutual elimination of the measurements.

Since 1990s, the methods of nonnegative matrix or tensor factorizations have widely used NNLS to achieve a low-rank representation of nonnegative data~\cite{Kolda2009Tensor,Zhang2011a}. Particularly, the low-rank representation transfers data instances into a lower-dimensional space of components or sources to obtain higher speed and accuracy, and more concise interpret-ability of data processing that are essential in applications of signal and image processing, machine learning, and data mining~\cite{Chen2009Non}. 

Gradient methods are usually employed  to solve NNLS because there is not any general formula of solutions, although NNLS is a convex optimization problem having a unique solution. As other gradient methods, the performance of NNLS algorithms mainly depends on selecting appropriate directions to optimize the objective function. To do this, most effective algorithms remove redundant variables based on the concept of active sets~\cite{Bro1997,Chen2009Non} in each iteration, although they are different in the employed strategies~\cite{Chen2009Non}. Fundamentally, these  algorithms are based on the observation that several variables can be ignored if they are negative when the problem is unconstrained~\cite{Bro1997,Lawson1974,Van2004fast}. In other words, NNLS can be considered as an unconstrained problem in a subspace of several variables~\cite{Kim2013} that are positive in the optimal solution. In addition, algorithms using the second derivative~\cite{Bro1997,Lawson1974,Van2004fast} discover effective directions that more effectively reduce the objective function. However, these approaches have two main drawbacks: invertibility of $A^TA$ and its heavy computation, especially for the methods recomputing $(A^TA)^{-1}$ several times for different passive sets. Hence, other algorithms using only the first derivative~\cite{Franc2005seq,Kim2013,Potluru2012frugal} can be more effective for large-scale least squares problems. 

In our view, to discover more appropriate gradient directions is critical to enhance the performance of NNLS algorithms. In this paper, we propose a fast robust iterative algorithm to solve that issue called anti-lopsided algorithm. The main idea of this algorithm is to transfer the original problem into an equivalent problem, by which scaling problems in the first order methods are significantly reduced to obtain more effective gradient directions. The proposed algorithm has significant advantages:

\begin{itemize}
	\item \textbf{Fastness}: The proposed algorithm attains linear convergence rate of $(1-\frac{1}{2||Q||_2})^k$ in the subspace of passive variables, where $\sqrt{n} \leq ||Q||_2 \leq n$ and $n$ is the dimension size of solutions. In addition, it does not require computing the inverse of matrices $(A^TA)^{-1}$ because it is totally based on the first derivative.
	
	\item \textbf{Robustness}: It can stably work in ill-conditioned cases since it is totally based on the first derivative,
	
	\item \textbf{Effectiveness}: The experimental results are highly comparable with the state-the-art methods,
	
	\item \textbf{Convenience}: Interestingly, the method is convenient to be implemented, paralleled and distributed since it is totally based on the first order derivative and gradient methods using the exact line search.
\end{itemize}

The rest of paper is organized as follows. Section~\ref{sec:background} discusses the background and related work of least square problems. Section~\ref{sec:antilopsided} mentions the details of our proposed algorithm anti-lopsided. Subsequently, the theoretical analysis is discussed in Section~\ref{sec:antilopsided}. Finally, Section~\ref{sec:evaluation} shows the experimental results, and Section~\ref{sec:conclusion} summarizes the main contributions of this paper.

\section{Background and Related Work} \label{sec:background}

In this section, we introduce the non-negative least square (NNLS) problem, its equivalent problem as well as non-negative quadratic problem (NQP), and significant milestones of the algorithmic development for NNLS. 

\subsection{Background}

Non-negative least square (NNLS) can be considered one of the most central problems in data modeling to estimate the parameters of models for describing the data~\cite{Chen2009Non}. It comes from scientific applications where we need to estimate a large number of vector observations $b \in \mathcal{R}^{d}$ by a set of measures or basis factors $\{A_i\}$ contained in a matrix $A \in \mathcal{R}^{d \times n}$. The common task is to approximate an observation $b$ by the measures by minimizing $\frac{1}{2}||Ax - b||^2_2$. Hence, we  can define NNLS as follows:

\begin{definition}
	\it Given $n$ measurement vectors $A = [A_1, A_2,$ $..., A_n] \in \mathcal{R}^{d \times n}$ and an observed vector $b \in \mathcal{R}^d$, find an optimal solution $x$ of the optimization problem:
	\begin{equation}\label{eq:NNL}
		\begin{aligned}
			& \underset{x}{\text{minimize}}
			&& \frac{1}{2}||Ax - b||^2_2 \\
			&\text{subject to}
			&& x \succeq 0\\
			&\text{where}
			&& A \in \mathcal{R}^{d \times n}, b \in \mathcal{R}^d
		\end{aligned}
	\end{equation}
\end{definition}

Obviously, it can be equivalently turned  into a non-negative quadratic programming (NQP) problem:

\begin{equation}\label{eq:NQP}
	\begin{aligned}
		& \underset{x}{\text{\textit{minimize}}}
		& & f(x) = \frac{1}{2}x^THx + h^Tx \\
		& \text{\textit{subject to}}
		& & x \succeq 0.\\
		& \text{\textit{where}}
		& & H = A^TA,  h = -A^Tb\\
	\end{aligned}
\end{equation}

From this NQP formulation, these problems are convex since $Q$ is positive semidefinite and the non-negativity constraints form a convex feasible set. In this paper, we will solve Problem \ref{eq:NQP} instead of Problem \ref{eq:NNL}.

\subsection{Related Work} In last several decades of development,  many different approaches have been proposed to tackle the NNLS problem, which can be divided into two main groups: active-set methods and iterative methods~\cite{Chen2009Non}.

Active-set methods are based on the observation that variables can be divided into subsets: active and passive variables~\cite{Gill1987}. Particularly, the active set contains variables being zero or negative when solving the least square problem without concerning non-negative constraints, otherwise the other variables belongs to the passive set. 
The active-set algorithms employ the fact that if  the active set is identified, the values of the passive variables in NNLS are their values in the unconstrained least squares solution when removing active variables that will be set to zero. Unfortunately, these sets are unknown in advance. Hence, a number of iterations is employed to find out the passive set, each of which needs to solve a unconstrained least squares problem on the passive set to update the passive set and the active set. 
Concerning the significant milestones of the active-set methods, Lawson and Hanson (1974)~\cite{Lawson1974} proposed a standard algorithm for active-set methods. Subsequently, Bro and De Jong (1997)~\cite{Bro1997} avoided unnecessary re-computations on multiple right-hand sides to speed up the basic algorithm~\cite{Lawson1974}. 
Finally,  Dax (1991)~\cite{Dax1991} proposed selecting a good starting point by \textit{Gauss-Seidel} iterations and moving away from a \quotes{dead point} to reduce the number of iterations.

On the other hand, the iterative methods use the first order gradient on the active set to handle multiple active constraints in each iteration, while the active-set methods only handle one active constraint~\cite{Chen2009Non}.  Hence, the iterative methods can deal with larger-scale problems~\cite{Kim2006NNLS,Kim2013} than the active-set methods. However, they are still not guaranteed the convergence rate. More recently, accelerated  methods~\cite{Nesterov1983} and proximal  methods~\cite{Parikh2013proximal} having a fast convergence $O(1/k^2)$~\cite{Guan2012} only require the first order derivative. However, one major disadvantage of the accelerated methods is that they require a big number of iterations when the solution requires high accuracy because the step size is limited by $\frac{1}{M}$ that is usually small for large-scale NNLS problems with big matrices; where $M$ is Lipschitz constant.

In summary, active-set methods and iterative methods are two major approaches in solving NNLS. The active-set methods accurately solve nonnegative least square problem, but require a huge computation for solving unconstrained least squares problems and are unstable when $A^TA$ is ill-conditioned.  The iterative methods have more potential in solving large-scale NNLS because they can handle multiple active constraints per each iteration. In our view, the iterative methods are still ineffective due to the scaling problem on variables that seriously affects the finding of appropriate gradient directions. Therefore, we propose the anti-lopsided algorithm as an iterative method that re-scales variables to reduce the scaling problem, obtain appropriate gradient directions, and achieve linear convergence on the sub-space of passive variables.

\section{Anti-lopsided algorithm}\label{sec:antilopsided}

In this section, we discuss our new idea to develop a fast and robust iterative algorithm for large-scale NNLS problem. For more readability, we separate the proposed algorithm into two module algorithms: Algorithm~\ref{algo:NNLS} for solving NNLS problem by first transforming it into a NQP problem and re-scaling variables by which the scaling problem of variables is significantly reduced, then calling Algorithm \ref{algo:NQPByExactLineSearch} for solving NQP problem by a first-order gradient method using the exact line search.

\begin{figure}
	\centering{
		\hspace*{-20pt}
		\includegraphics[scale=0.5]{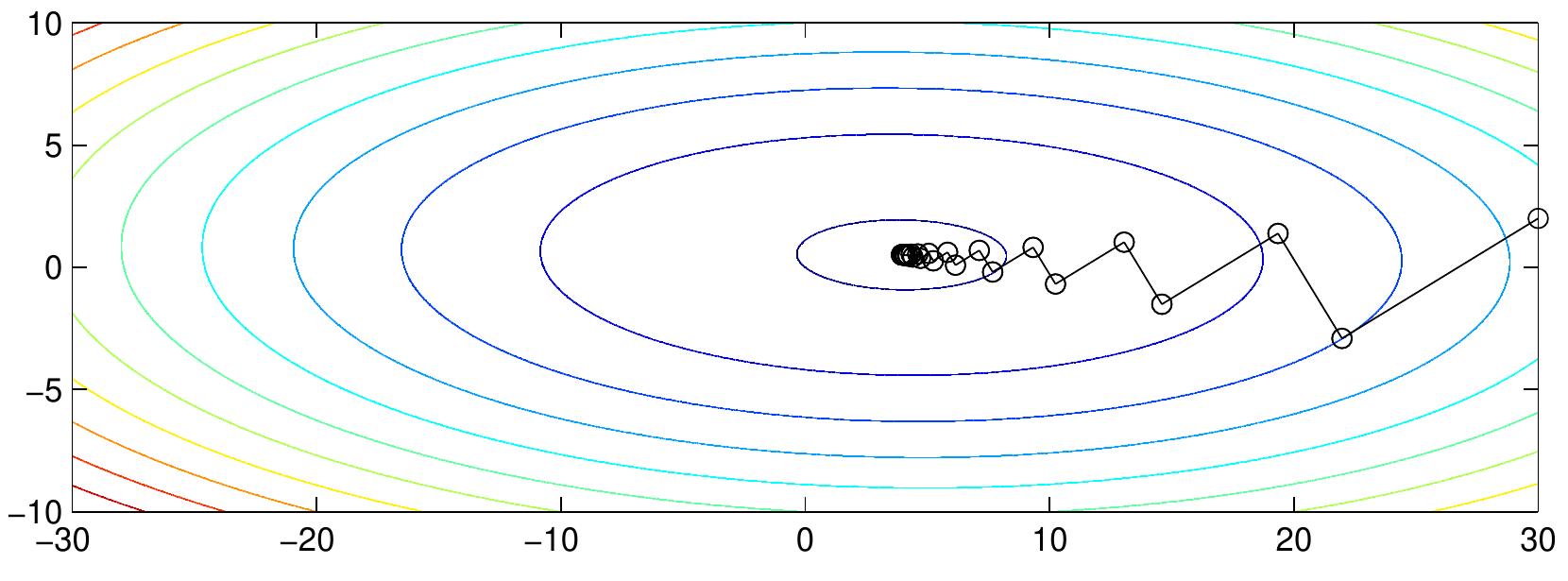}
		\caption{40 optimizing steps in the iterative exact line search method using  the first order derivative for the function~\ref{eq:scaling} starting at $x_0=[30\ 2]^T$}
		\label{fig:not_using_scissored}}
	\vspace*{-10pt}
\end{figure}

\begin{algorithm2e}  
	\caption{Anti-lopsided algorithms for NNLS}\label{algo:NNLS} 
	\KwIn{$A \in \mathcal{R}^{d\times n}$; $b \in \mathcal{R}^d$}
	\KwOut{$x$ minimizing $\frac{1}{2}||Ax - b||^2_2$\\
		\ \ \ \ \ \ \ \ \ \ \ \ \ subject to: $x \succeq 0$
	}
	\Begin{
		$H = A^TA$\;
		$Q = \frac{H}{\sqrt{diag(H)diag(H)^T}}$\;
		$q = \frac{-A^Tb}{\sqrt{diag(H)}}$\;
		$y$ = solveNQP$(Q, q)$/*by Algorithm \ref{algo:NQPByExactLineSearch}*/\;
		$x = \frac{y}{\sqrt{diag(H)}}$\;
		\Return{x}
	}
\end{algorithm2e}

It is well known from the literature that iterative methods using  the first order derivative can be suitable for designing large-scale algorithms for NNLS. However, this approach heavily depends on the scaling of variables~\cite{Boyd2004convex}. For example, if we employ the iterative exact line search method using  the first order derivative to optimize the function~\ref{eq:scaling}, we need 40 iterations to reach the optimal solution (see Figure~\ref{fig:not_using_scissored}):

\begin{equation}\label{eq:scaling}
f(x) = \frac{1}{2}x^T\begin{bmatrix} 1&0.1\\ 0.1&9 \end{bmatrix}x + [-4 -5]x
\end{equation}

Hence, we re-scale variables into a new space, in which new variables have more balanced roles and for that reason we call our algorithm as anti-lopsided algorithm. Particularly, we re-scale:

\begin{equation}\label{eq:Rescaling}
\begin{aligned}
&\ && x = \frac{y}{\sqrt{diag(H)}}  &\text{ or }&& x_i = \frac{y_i}{\sqrt{H_{ii}}}\ \ \forall i\\
\end{aligned}
\end{equation}

It is noticeable that $H_{ii}=A_i^TA_i=||A_i||_2^2 \geq 0$. For the special case $H_{ii}=0$, we give $x_i=y_i$. After re-scaling variables, the original Problem~\ref{eq:NQP}  is equivalently transformed into NQP Problem~\ref{eq:NQPQ}:
\begin{equation}\label{eq:NQPQ}
\begin{aligned}
&\underset{y}{\text{\textit{minimize}}} &&f(y) = \frac{1}{2}y^TQy + q^Ty \\
&\text{\textit{subject to}} &&y \succeq 0\\
&\text{\textit{where}} && Q_{ij}=\frac{H_{ij}}{\sqrt{H_{ii}H_{jj}}}; q_i = \frac{h_i}{\sqrt{H_{ii}}}\\
\end{aligned}
\end{equation}

\begin{remark}
	Consider the values of matrix $Q$, we have: 
	\begin{itemize}
		\item $Q_{ii}=\frac{H_{ii}}{\sqrt{H_{ii}^2}}=\cos(A_i, A_i)=1,\ \forall\ i=1,...,n$
		\item $Q_{ij}=\frac{H_{ij}}{\sqrt{H_{ii}H_{jj}}}=\cos(A_i,A_j),\ \forall\ 1 \leq i \neq j \leq n$
	\end{itemize}
\end{remark}

Obviously, the scaling problem of variables is significantly reduced because the roles of variables in the function become more balanced. The shape of function is transformed from ellipse-shaped(see Figure~\ref{fig:not_using_scissored}) to ball-shaped (see Figure~\ref{fig:using_antilopsided_scissored}). Hence, the first order methods can much more effectively work. For example, we need only 3 iterations instead of 40 iterations to reach the optimal solution for the function~\ref{eq:scaling} with the same initial point $y_0$, where ${y_0}_i={x_0}_i.H_{ii},\ \forall i$ (see Figure~\ref{fig:using_antilopsided_scissored}).

Subsequently, we discuss on Algorithm \ref{algo:NQPByExactLineSearch} for solving NQP with input $Q$ and $q$. For each iteration, the objective function is optimized on the passive set:
\begin{equation*}
P(x) = \{x_i | x_i > 0 \text{ or } \nabla f_i(x) < 0\}
\end{equation*}

Hence, the first order gradient will be projected into the sub-space of the passive set $\nabla \bar{f} = [\nabla f]_{P(x)}$ ($\nabla \bar{f}_i = \nabla f_i \text{ for } i \in P(x),\text{ otherwise } \nabla \bar{f}_i = 0$). Noticeably, the passive set can change through iterations, and Algorithm \ref{algo:NQPByExactLineSearch} is converged when $P(x) = \emptyset$ or $||\nabla \bar{f}||^2 < \epsilon$.  In addition, the orthogonal projection on the sub-space of passive variables $x = [x_k + \alpha \nabla \bar{f}]_+$ is trivial~\cite{Kim2013} since NQP problem~\ref{eq:NQPQ} is a strongly convex problem on a convex set.

\begin{algorithm2e} [h]
	\caption{Gradient Method using Exact Line Search for NQP}
	\label{algo:NQPByExactLineSearch} 
	\KwIn{$Q \in \mathcal{R}^{n \times n}$; $q \in \mathcal{R}^n$}
	\KwOut{$x$ minimizing $f(x) = \frac{1}{2}x^TQx + q^Tx$\\
		\ \ \ \ \ \ \ \ \ \ \ \ \ subject to: $x \succeq 0$}
	\Begin{
		$x^0 = \text{\textbf{0}}$\;
		$\nabla f = q$\;
		\Repeat{$P(x) = \emptyset$ or $||\nabla \bar{f}||^2 < \epsilon$}{
			/*Remove active variables and keep passive variables*/\;
			$\nabla \bar{f} = \nabla f[x > 0 \text{ or } \nabla f < 0]$\;
			$\alpha = \argmin{\alpha} f(x_k - \alpha \nabla \bar{f}) = \frac{||\nabla \bar{f}||^2_2}{\nabla \bar{f}^TQ\nabla \bar{f}}$\;
			$x_{k+1} = [x_k - \alpha \nabla \bar{f}]_+$\;
			$\nabla f = \nabla f + Q(x_{k+1} - x_k)$\;
		}
		\Return{$x_{k}$}
	}
\end{algorithm2e}

Regarding computing $a$ of Algorithm \ref{algo:NQPByExactLineSearch}, we have:

\begin{equation*}
\begin{aligned}
& f(x - \alpha \nabla \bar{f}) = - \alpha \nabla \bar{f}^T[Qx+q] + \frac{\alpha^2}{2}\nabla \bar{f}^TQ\nabla \bar{f}  + \text{C}\\
\Rightarrow & \frac{\partial f}{\partial \alpha} = 0 \Leftrightarrow \alpha = \frac{\nabla \bar{f}^T (Qx+q)}{\nabla \bar{f}^TQ\nabla \bar{f}} = \frac{||\nabla \bar{f}||^2_2}{\nabla \bar{f}^TQ\nabla \bar{f}}\\
&\text{ where $C$ is constant }
\end{aligned}
\end{equation*}

\begin{figure}
	\centering{
		\hspace*{-20pt}
		\includegraphics[scale=0.5]{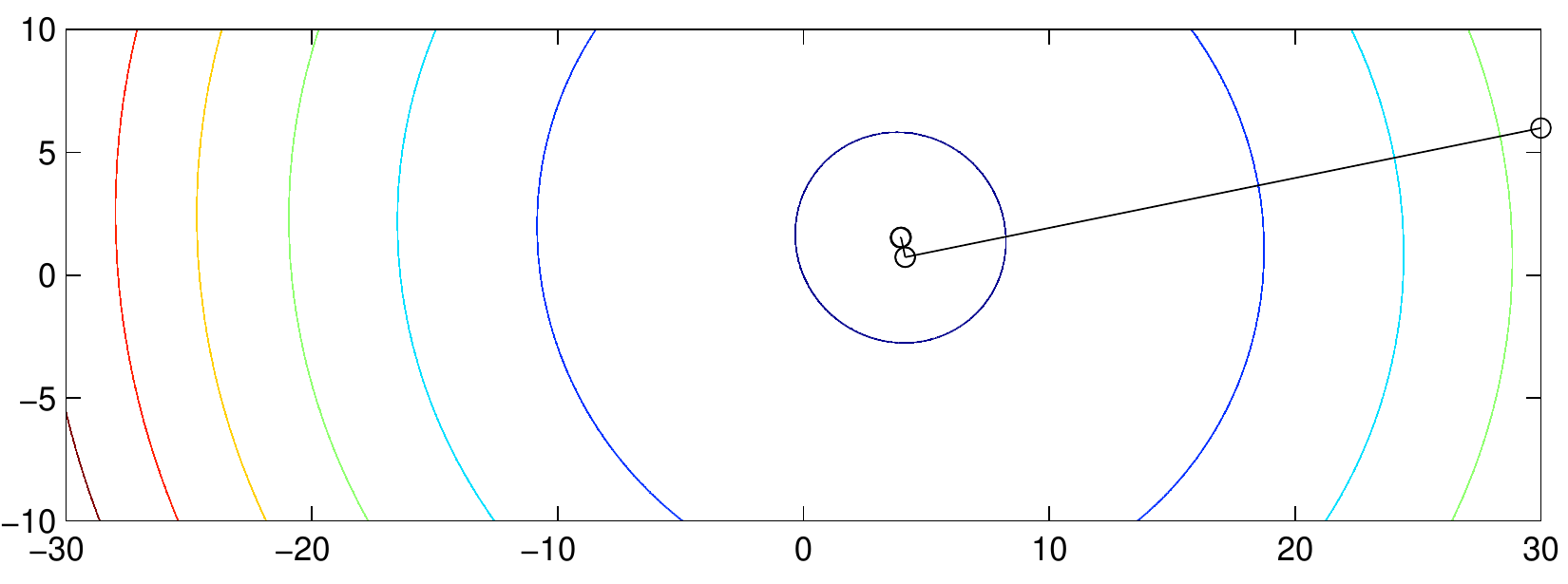}
		\caption{3 optimizing steps in the iterative exact line search method using  the first order derivative after applying anti-lopsided steps starting at $y_0 = x_0.*\sqrt{diag(H)}$}
		\label{fig:using_antilopsided_scissored}}
	\vspace*{-10pt}
\end{figure}

\section{Theoretical Analysis}\label{sec:analysis}

In this section, we investigate the convergence and complexity of the proposed algorithm.

\subsection{Convergence}

Concerning the convergence rate, our method argues Barzilai and Borwein's note that NNLS is an unconstrained optimization on the passive set of variables~\cite{Bro1997}. Moreover, the orthogonal projection on the sub-space of passive variables $x = [x_k + \alpha \nabla \bar{f}]_+$ is trivial~\cite{Kim2013} since NNLS and its equivalent problem (NQP) is a strongly convex problem on a convex set. Hence, in this section, we consider the convergence rate of Algorithm \ref{algo:NQPByExactLineSearch} in the sub-space of passive variables.

Let $f(x) = \frac{1}{2}x^TQx + q^Tx$, where  $Q_{ij} = \cos(A_i, A_j) = cos(a_i, a_j)$ and where $a_k = \frac{A_k}{||A_k||_2}$ is the unit vector $A_k$, $\forall k$. 

Since $f(x)$ is strongly convex, we have:
\begin{itemize}
	\item $\exists m, M > 0$ satisfy $ mI \preceq \nabla^2 f \preceq MI$
	\item $\forall x, y: f(y) \geq f(x) + \langle \nabla f(x), (y - x)\rangle + \frac{m}{2} ||y - x||^2 $
	\item $\forall x, y: f(y) \leq f(x) + \langle \nabla f(x), (y - x)\rangle + \frac{M}{2} ||y - x||^2 $
\end{itemize}

%
%
Based on~\cite{Caramanis_ee381v:}, we have:
\begin{theorem}
	\label{theo:convergence}
	After $(k+1)$ iterations, $f(x^{k+1}) - f^* \leq (1 - \frac{m}{M})^k (f(x^0) - f^*)$, where $f^*$ is the minimum value of $f(x)$.
\end{theorem}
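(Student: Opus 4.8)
The plan is to run the classical linear-convergence argument for steepest descent on a strongly convex quadratic, adapted to the exact line search of Algorithm~\ref{algo:NQPByExactLineSearch} and to the passive subspace on which it operates. The three ingredients are a sufficient-decrease inequality from the $M$-smoothness upper bound, a gradient-domination inequality from the $m$-strong-convexity lower bound, and a one-line induction.

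First I would prove sufficient decrease. Since the exact line search chooses $\alpha$ to minimize $f(x^k - \alpha\nabla\bar f)$, the updated value is no larger than the value at the comparison step $\alpha = 1/M$. Applying the upper quadratic bound at $y = x^k - \frac1M\nabla\bar f$ and using the identity $\langle\nabla f(x^k),\nabla\bar f\rangle = \|\nabla\bar f\|^2$ (the projected gradient agrees with $\nabla f$ on the passive coordinates and vanishes elsewhere), the cross term combines with the quadratic term to leave $-\frac{1}{2M}\|\nabla\bar f\|^2$, so that, after the trivial projection $[\,\cdot\,]_+$,
\[
f(x^{k+1}) \le f(x^k) - \frac{1}{2M}\|\nabla\bar f(x^k)\|^2 .
\]
Next I would convert the lower quadratic bound into a gradient-domination (Polyak--\L{}ojasiewicz) inequality: restricting $f$ to the passive subspace, whose Hessian is a principal submatrix of $Q$ and hence still satisfies $mI \preceq \nabla^2\bar f \preceq MI$ by Cauchy interlacing, and minimizing the lower bound $f(x)+\langle\nabla\bar f,y-x\rangle+\frac m2\|y-x\|^2$ over $y$ in that subspace, gives
\[
\|\nabla\bar f(x^k)\|^2 \ge 2m\bigl(f(x^k)-f^*\bigr).
\]
Substituting this into the decrease inequality yields the one-step contraction $f(x^{k+1})-f^* \le (1-\frac mM)(f(x^k)-f^*)$, and iterating over the $k+1$ updates gives $f(x^{k+1})-f^* \le (1-\frac mM)^{k+1}(f(x^0)-f^*)$, which implies the claimed bound since $0 \le 1-\frac mM < 1$.

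The main obstacle is making the gradient-domination step rigorous in the presence of the active set, which changes from iteration to iteration. The sufficient-decrease step is unproblematic, but in the second step the naive bound goes the wrong way: since $\nabla\bar f$ discards coordinates, $\|\nabla\bar f\| \le \|\nabla f\|$, so one cannot simply inherit the standard inequality $\|\nabla f\|^2 \ge 2m(f-f^*)$. The fix is to work entirely with the restricted problem, for which $\nabla\bar f$ \emph{is} the true gradient, and to argue that at the iterates produced by the algorithm the minimum of $f$ over the current passive subspace coincides with the global optimum $f^*$ on the non-negative orthant. This is exactly the active-set characterization invoked in Section~\ref{sec:analysis} (that NNLS reduces to an unconstrained least-squares problem on the passive set), and it is also what legitimizes treating the projection $[\,\cdot\,]_+$ as trivial so that the descent is never undone by clipping. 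Once this identification is secured, both quadratic bounds apply verbatim in the restricted coordinates with the same constants $m$ and $M$, and the induction above closes.
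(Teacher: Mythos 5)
Your proposal is correct and takes essentially the same route as the paper: the sufficient-decrease inequality $f(x^{k+1}) \le f(x^k) - \frac{1}{2M}\|\nabla f\|^2$ obtained by comparing the exact line search against the step $1/M$ under the $M$-smoothness bound, the gradient-domination inequality $\|\nabla f\|^2 \ge 2m(f(x^k)-f^*)$ obtained by minimizing the $m$-strong-convexity lower bound, and the one-step contraction iterated $k+1$ times. In fact you are more careful than the paper, which runs the argument with the full gradient $\nabla f$ and never confronts the passive-set and projection issues at all; your discussion of why $\|\nabla\bar f\| \le \|\nabla f\|$ breaks the naive transfer, and your fix via restricting to the passive subspace (with the interlacing observation preserving $m$ and $M$), makes explicit exactly the assumption the paper buries in its informal appeal to the active-set characterization.
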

\begin{proof}
	Since $f(y) \leq f(x) + \langle\nabla f, y-x\rangle + \frac{M}{2}||y - x||^2_2\ \ \forall x, y$ selecting $y=x - \frac{1}{M}\nabla f$ and $x^+$ is the updated value of $x$ after an iteration by the first order gradient using exact line search, we have:
	\begin{equation}
	\begin{aligned}
	f(x^+) \leq & f(x - \frac{1}{M}\nabla f)
	\leq f(x) - \frac{1}{M}||\nabla f||^2_2 + \frac{M}{2}(\frac{1}{M})^2||\nabla f||^2_2\\
	\leq & f(x) - \frac{1}{2M}||\nabla f||^2_2
	\end{aligned}
	\end{equation}
	Hence, for the minimum value $f^*$ of the objective function, we have:
	\begin{equation}\label{eq:cm_ht_1}
	f(x_{k + 1}) - f^* \leq (f(x_k) - f^*) - \frac{1}{2M} ||\nabla f||^2_2
	\end{equation}
	
	Consider $f(y) = f(x) + \langle\nabla f, y-x\rangle + \frac{m}{2}||y - x||^2_2$ (fixing $x$) is a convex quadratic function of $y$. Hence, $f(y)$ minimizes when $\nabla f(y) = 0 \Leftrightarrow y = \tilde{y}=x-\frac{1}{m}\nabla f$. 
	In addition, since $f(y) \geq f(x) + \langle\nabla f, y-x\rangle + \frac{m}{2}||y - x||^2_2\ \ \forall x, y$, we have:
	\begin{equation}
	\begin{aligned}
	f(y) \geq & f(x) + \langle \nabla f, y - x\rangle + \frac{m}{2}||y - x||
	\geq f(x) + \langle \nabla f, \tilde{y} - x\rangle + \frac{m}{2}||\tilde{y} - x||\\
	= & f(x) - \frac{1}{2m}||\nabla f||^2_2 \ \ \forall x, y\\
	\end{aligned}
	\end{equation}
	Selecting $y = x^*$ and $x = x_{k}$ where $x^*$ is the optimal solution, we have:
	\begin{equation}\label{eq:cm_ht_2}
	-||\nabla f||^2_2 \leq 2m (f^*- f(x_k))
	\end{equation}
	From (\ref{eq:cm_ht_1}) and (\ref{eq:cm_ht_2}), we have the necessary result:
	\begin{equation*}
	f_{k+1} - f^* \leq (1-\frac{m}{M}) (f(x_k) - f^*) \leq (1-\frac{m}{M})^k (f(x_0) - f^*)
	\end{equation*}
\end{proof}

\begin{lemma}
	\label{lemma:Mm}
	For $f(x) = \frac{1}{2}x^TQx + q^Tx$, then $\frac{1}{2}I \preceq \nabla^2 f \preceq ||Q||_2I$ and $\sqrt{n} \leq ||Q||_2 \leq n$, where  $Q_{ij} = cos(a_i, a_j)$, $a_i, a_j$ are unit vectors,  and $||Q||_2 = \sqrt{\sum_{i=1}^{n}\sum_{j=1}^{n}Q_{ij}^2}$.
\end{lemma}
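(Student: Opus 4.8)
The plan is to separate the statement into its two independent parts: the operator sandwich $\frac{1}{2}I \preceq \nabla^2 f \preceq \|Q\|_2 I$ and the scalar estimates $\sqrt{n}\le\|Q\|_2\le n$. The starting point for everything is that $f$ is quadratic, so $\nabla^2 f = Q$ exactly, and that by the Remark preceding the lemma $Q_{ii}=1$ and $Q_{ij}=\cos(a_i,a_j)$ with $|Q_{ij}|\le 1$; writing $B=[a_1,\dots,a_n]$ for the matrix of unit columns, $Q=B^TB$ is a symmetric positive semidefinite Gram matrix with unit diagonal. I would then dispatch the norm bounds, the upper spectral bound, and finally the lower spectral bound, in that order of increasing difficulty.

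The two scalar bounds follow immediately from the definition $\|Q\|_2^2=\sum_{i,j}Q_{ij}^2$. Keeping only the diagonal gives $\|Q\|_2^2\ge\sum_i Q_{ii}^2=n$, hence $\|Q\|_2\ge\sqrt n$; bounding every one of the $n^2$ entries by $|Q_{ij}|\le 1$ gives $\|Q\|_2^2\le n^2$, hence $\|Q\|_2\le n$. For the upper spectral bound I would use that $Q$ is symmetric with real nonnegative eigenvalues $\lambda_1,\dots,\lambda_n$, so $\lambda_{\max}^2\le\sum_i\lambda_i^2=\operatorname{tr}(Q^2)=\|Q\|_2^2$, giving $\lambda_{\max}(Q)\le\|Q\|_2$ and therefore $Q\preceq\|Q\|_2 I$. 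All three of these are routine.

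The real content, and the step I expect to be the main obstacle, is the lower bound $\frac{1}{2}I\preceq Q$, equivalently $\lambda_{\min}(Q)\ge\frac12$ or $v^TQv\ge\frac12\|v\|_2^2$ for all $v$. The natural formulation is $\lambda_{\min}(Q)=\min_{\|v\|_2=1}\|Bv\|_2^2$, the smallest squared singular value of $B$, and the task is to bound it below by $\frac12$. Here I would be cautious: for a Gram matrix of unit vectors this quantity can be made arbitrarily small (two nearly collinear columns already drive it toward $0$), so $\lambda_{\min}(Q)\ge\frac12$ cannot hold for an arbitrary cosine matrix, and the bound must draw on structure beyond what the lemma literally states — most plausibly the restriction to the passive subspace announced at the opening of this section, or an incoherence hypothesis on the $a_i$.

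Consequently I would concentrate all verification on this last inequality. Concretely, I would either (i) pin down the implicit hypothesis under which $m=\frac12$ is legitimate and carry it explicitly through the Rayleigh-quotient estimate, or (ii) if no such hypothesis is intended, keep $m=\lambda_{\min}(Q)$ symbolic and propagate it through Theorem~\ref{theo:convergence}, replacing the advertised rate $(1-\frac{1}{2\|Q\|_2})^k$ by $(1-\frac{\lambda_{\min}(Q)}{\|Q\|_2})^k$. The delicate part is thus not any computation but deciding precisely what guarantees the strong-convexity constant $\tfrac12$; everything else in the lemma is a short and direct calculation.
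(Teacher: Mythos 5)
Your handling of the three provable parts matches the paper in substance: the bounds $\sqrt{n}\le\|Q\|_2\le n$ are obtained there exactly as you do (diagonal terms for the lower bound, $|Q_{ij}|\le 1$ for the upper), while for $Q\preceq\|Q\|_2I$ the paper applies Cauchy--Schwarz directly to the quadratic form, $(\sum_{i,j}Q_{ij}x_ix_j)^2\le(\sum_{i,j}Q_{ij}^2)(\sum_{i,j}x_i^2x_j^2)$, instead of your trace/eigenvalue argument; both are valid, and yours additionally makes explicit where symmetry is used.

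The important point is your refusal to certify $\frac{1}{2}I\preceq Q$: you are right to balk, and here the paper's own proof is flawed. It rests on the claimed identity
\[
\frac{1}{2}\sum_{i=1}^{n}x_i^2+\frac{1}{2}\Bigl\|\sum_{i=1}^{n}x_ia_i\Bigr\|^2=\sum_{i=1}^{n}\sum_{j=1}^{n}Q_{ij}x_ix_j .
\]
Since $\|\sum_i x_ia_i\|^2=x^TQx$, this identity asserts $\frac{1}{2}x^Tx+\frac{1}{2}x^TQx=x^TQx$, i.e.\ $x^Tx=x^TQx$ for every $x$, which forces $Q=I$; it is false for any nontrivial cosine matrix. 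What the paper's chain of inequalities legitimately establishes is only $\frac{1}{2}I\preceq\frac{1}{2}(I+Q)$, which is nothing more than positive semidefiniteness of $Q$. And the statement itself, not just the proof, fails in general: take $a_1=a_2$, so that $Q$ is the $2\times 2$ all-ones matrix and $x=(1,-1)^T$ gives $x^TQx=0<\frac{1}{2}\|x\|_2^2$ --- exactly the near-collinear degeneracy you anticipated. Nor does restricting to the passive subspace help, since any principal submatrix of $Q$ is again a Gram matrix of unit vectors and can be equally ill-conditioned. Your fallback (ii) is the sound repair: keep $m=\lambda_{\min}(Q)$ symbolic (valid whenever $Q\succ 0$), so the rate in Theorem~\ref{theo:convergence} becomes $(1-\lambda_{\min}(Q)/\|Q\|_2)^k$; the constant $\frac{1}{2}$, and with it the paper's headline rate $(1-\frac{1}{2\|Q\|_2})^k$, is not justified by the argument given.
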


\begin{proof}
	We have $\nabla^2 f = Q$, and
	
	$\frac{1}{2}x^TIx \leq \frac{1}{2}(\sum_{i=1}^{n}x^2_i) + \frac{1}{2}||\sum_{i=1}^{n}x_i a_i||^2 = \sum_{i=1}^{n}\sum_{j=1}^{n}Q_{ij}x_ix_j=x^TQx$ for $\forall x$
	$\Rightarrow \frac{1}{2}I \preceq \nabla^2 f$.
	
	Moreover, based on Cauchy-Schwarz inequality, we have:
	\begin{equation*}
	\begin{aligned}
	\ &(\sum_{i=1}^{n}\sum_{j=1}^{n}Q_{ij}x_ix_j)^2 \leq (\sum_{i=1}^{n}\sum_{j=1}^{n} Q_{ij}^2) (\sum_{i=1}^{n}\sum_{j=1}^{n} ({x_i}{x_j})^2) \\
	\Rightarrow &\sum_{i=1}^{n}\sum_{j=1}^{n}Q_{ij}x_ix_j \leq \sqrt{||Q||^2_2 (\sum_{i=1}^{n} {x_i}^2)^2}\\
	\Leftrightarrow & x^TQx \leq ||Q||_2x^TIx \ \ (\forall x) \ \ \Leftrightarrow Q \preceq ||Q||_2I\\
	\end{aligned}
	\end{equation*}
	
	Finally, $\sqrt{n} = \sqrt{\sum_{i=1}^{n} Q_{ii}^2} \leq ||Q||_2 = \sqrt{\sum_{i=1}^{n}\sum_{j=1}^{n}Q_{ij}^2} \leq \sqrt{n^2} = n$ since $-1 \leq Q_{ij} = \cos(a_i, a_j) \leq 1$.
\end{proof}

From Theorem~\ref{theo:convergence} and Lemma~\ref{lemma:Mm}, setting $m=\frac{1}{2}$ and $M=||Q||_2$, we have:
\begin{theorem}
	After $k+1$ iterations, $f(x^{k+1}) - f(x^*) \leq (1 - \frac{1}{2||Q||_2})^k (f(x^0) - f(x^*))$, where $\sqrt{n} \leq ||Q||_2 \leq n$ and $n$ is the dimension of $x$.
\end{theorem}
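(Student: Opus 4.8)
The plan is to obtain this statement as an immediate corollary of Theorem~\ref{theo:convergence} and Lemma~\ref{lemma:Mm}, since all the analytic work has already been carried out in those two results. Theorem~\ref{theo:convergence} delivers, for \emph{any} admissible pair of constants $m, M > 0$ satisfying $mI \preceq \nabla^2 f \preceq MI$, the geometric decay $f(x^{k+1}) - f^* \leq (1 - \frac{m}{M})^k (f(x^0) - f^*)$. Thus it suffices to exhibit one valid pair $(m, M)$ and substitute it into the contraction ratio $1 - m/M$.

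First I would invoke Lemma~\ref{lemma:Mm}, which shows that for $f(x) = \frac{1}{2}x^T Q x + q^T x$ the Hessian is $\nabla^2 f = Q$ and obeys $\frac{1}{2}I \preceq Q \preceq ||Q||_2 I$. This identifies $m = \frac{1}{2}$ and $M = ||Q||_2$ as an admissible pair, and I would then verify the sole quantitative hypothesis of Theorem~\ref{theo:convergence}, namely $m, M > 0$: indeed $m = \frac{1}{2} > 0$, and $M = ||Q||_2 \geq \sqrt{n} > 0$ by the lower bound established in the same lemma. Substituting these constants yields $1 - \frac{m}{M} = 1 - \frac{1}{2||Q||_2}$, which is exactly the claimed factor; the remaining clause $\sqrt{n} \leq ||Q||_2 \leq n$ is literally the final line of Lemma~\ref{lemma:Mm} and so transfers with no additional argument.

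The one point demanding a moment's care---and the closest thing to an obstacle---is that Theorem~\ref{theo:convergence} is applied on the sub-space of passive variables, where the effective Hessian is the principal submatrix of $Q$ indexed by the passive coordinates rather than $Q$ itself. Here I would observe that Lemma~\ref{lemma:Mm} is actually proved as the quadratic-form inequalities $\frac{1}{2}||v||_2^2 \leq v^T Q v \leq ||Q||_2 ||v||_2^2$ holding for \emph{all} $v$. Restricting attention to vectors $v$ supported on the passive coordinates preserves both inequalities verbatim, so the same $m = \frac{1}{2}$ and $M = ||Q||_2$ remain admissible on every passive subspace; no eigenvalue-interlacing estimate for the submatrix is needed.

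With this observation in place the substitution is fully justified, and chaining the per-iteration bound $k$ times (exactly as in the final step of the proof of Theorem~\ref{theo:convergence}) produces $f(x^{k+1}) - f(x^*) \leq (1 - \frac{1}{2||Q||_2})^k (f(x^0) - f(x^*))$, completing the argument. I expect the entire proof to be short, as its substance is inherited from the two cited results and the only genuinely new content is the verification that the constants survive restriction to the passive set.
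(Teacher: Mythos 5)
Your proposal is correct and matches the paper's own argument, which is exactly the one-line combination ``From Theorem~\ref{theo:convergence} and Lemma~\ref{lemma:Mm}, setting $m=\frac{1}{2}$ and $M=\|Q\|_2$'' followed by substitution into the contraction factor. Your extra remark that the quadratic-form bounds $\frac{1}{2}\|v\|_2^2 \leq v^TQv \leq \|Q\|_2\|v\|_2^2$ survive restriction to the passive coordinates is a point the paper glosses over entirely, so your write-up is, if anything, slightly more careful than the original.
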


\subsection{Complexity}

Concerning the average complexity of Algorithm \ref{algo:NQPByExactLineSearch}, we consider the important operators:
\begin{itemize}
	\item Line 7: The complexity of computing $||\nabla \bar{f}||^2_2$ is $\mathcal{O}(n)$, and that of $\nabla \bar{f} Q \nabla \bar{f}$ is $\mathcal{O}(nS(n))$.
	\item Line 9: The complexity of computing $Q(x_{k+1} - x_k)$ is $\mathcal{O}(nS(n))$.
\end{itemize}
\noindent where $S(n)$ is the average number of non-zero elements of $\nabla \bar{f}$. Noticeably, the sparsity of $(x_{k+1} - x_k)$ equals to the sparsity of $\nabla \bar{f}$ since $x_{k+1} = x_k + \alpha \nabla \bar{f}$.

\begin{lemma} \label{lemma:complexity}
	The average complexity of Algorithm \ref{algo:NQPByExactLineSearch} is $\mathcal{O}(\bar{k}nS(n))$, where $\bar{k}$ is the average number of iterations.
\end{lemma}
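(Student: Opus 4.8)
The plan is to bound the work performed in a single pass through the \textbf{Repeat} loop of Algorithm~\ref{algo:NQPByExactLineSearch} and then multiply by the average iteration count $\bar{k}$. First I would tally every line inside the loop. Line~6 forms $\nabla\bar{f}$ by masking the entries of $\nabla f$ according to the passive set $P(x)$, which is an $\mathcal{O}(n)$ scan. Line~7 computes the step size $\alpha$: by the itemized estimates preceding the lemma, its numerator $\|\nabla\bar{f}\|_2^2$ costs $\mathcal{O}(n)$ while its denominator $\nabla\bar{f}^{T}Q\nabla\bar{f}$ costs $\mathcal{O}(nS(n))$, the latter dominating because $\nabla\bar{f}$ has only $S(n)$ nonzero entries on average, so the matrix--vector product $Q\nabla\bar{f}$ is assembled from $S(n)$ scaled columns of $Q$. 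Line~8 projects $x_k-\alpha\nabla\bar{f}$ onto the nonnegative orthant entrywise, which is again $\mathcal{O}(n)$.

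The key step is Line~9, the incremental gradient update $\nabla f \leftarrow \nabla f + Q(x_{k+1}-x_k)$. Here I would lean on the observation already recorded in the text: since $x_{k+1}=[x_k-\alpha\nabla\bar{f}]_+$, the difference $x_{k+1}-x_k$ has support contained in that of $\nabla\bar{f}$ (the projection can only clip passive entries to the boundary), hence at most $S(n)$ nonzeros on average. Consequently $Q(x_{k+1}-x_k)$ is once more a sum of $S(n)$ columns of $Q$ and costs $\mathcal{O}(nS(n))$. I would emphasize that this incremental maintenance is precisely what avoids recomputing $Qx+q$ from scratch at cost $\mathcal{O}(n^2)$ in every iteration. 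Collecting the line-by-line bounds, the dominant terms are the two $\mathcal{O}(nS(n))$ products of Lines~7 and~9, so a single iteration runs in $\mathcal{O}(nS(n))$ time.

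Finally I would sum over iterations. Because both $S(n)$ (the average number of nonzeros of $\nabla\bar{f}$) and $\bar{k}$ (the average number of iterations) are taken in the averaged sense, the expected total work is the per-iteration cost $\mathcal{O}(nS(n))$ multiplied by the $\bar{k}$ iterations, yielding $\mathcal{O}(\bar{k}\,nS(n))$ as claimed. The only point demanding genuine care — and the nearest thing to an obstacle — is justifying that the supports of the step $x_{k+1}-x_k$ and of the projected gradient $\nabla\bar{f}$ coincide up to the clipping induced by $[\,\cdot\,]_+$, so that a single sparsity parameter $S(n)$ governs both matrix--vector products simultaneously; once that containment of supports is in hand, the remainder is a routine comparison of the cheaper $\mathcal{O}(n)$ operations against the two dominant $\mathcal{O}(nS(n))$ ones.
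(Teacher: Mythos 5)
Your proposal is correct and follows essentially the same route as the paper: a line-by-line tally identifying the two $\mathcal{O}(nS(n))$ matrix--vector products (Lines~7 and~9) as dominant, using the sparsity of $\nabla\bar{f}$ and of $x_{k+1}-x_k$, then multiplying by $\bar{k}$. Your treatment of the support containment under the projection $[\,\cdot\,]_+$ is in fact slightly more careful than the paper's, which asserts equality of sparsity by writing $x_{k+1}=x_k+\alpha\nabla\bar{f}$ without accounting for clipping.
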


Therefore, based on Lemma~\ref{lemma:complexity}, if we consider computing $A^TA$ and $A^Tb$ in $\mathcal{O}(dn+dn^2)$, we have:

\begin{theorem}
	The average complexity of Algorithm \ref{algo:NNLS} is $\mathcal{O}(dn+dn^2+\bar{k}nS(n))$, where $\bar{k}$ is the average number of iterations and $S(n)$ is the average number of non-zero elements of $\nabla \bar{f}$.
\end{theorem}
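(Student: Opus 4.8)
The plan is to treat Algorithm~\ref{algo:NNLS} as a fixed preprocessing pipeline whose last step is a single invocation of Algorithm~\ref{algo:NQPByExactLineSearch}, and simply add up the cost of each line. Since the preprocessing steps are executed once (there is no outer loop), the total running time is the sum of the per-line costs, and I would bound each one separately before collecting the dominant terms.

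First I would account for the two operations involving $A$. Forming $H = A^TA$ requires computing all $n^2$ inner products $A_i^TA_j$, each of length $d$, which costs $\mathcal{O}(dn^2)$; forming $A^Tb$ in the definition of $q$ costs $\mathcal{O}(dn)$. Next I would note that the remaining preprocessing operations are cheap: extracting $diag(H)$ and normalizing $H$ entrywise to obtain $Q$ is $\mathcal{O}(n^2)$, dividing $-A^Tb$ by $\sqrt{diag(H)}$ to obtain $q$ is $\mathcal{O}(n)$, and the final rescaling $x = y/\sqrt{diag(H)}$ is likewise $\mathcal{O}(n)$. For the dominant step, the call $y = \text{solveNQP}(Q,q)$ costs $\mathcal{O}(\bar{k}nS(n))$ directly by Lemma~\ref{lemma:complexity}.

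Summing these bounds gives $\mathcal{O}(dn^2) + \mathcal{O}(dn) + \mathcal{O}(n^2) + \mathcal{O}(n) + \mathcal{O}(\bar{k}nS(n))$. Since $n^2 \leq dn^2$ and $n \leq dn$ for $d \geq 1$, the two purely-polynomial-in-$n$ terms are absorbed, leaving the stated bound $\mathcal{O}(dn + dn^2 + \bar{k}nS(n))$. I would keep the $dn$ term explicit, as the excerpt does, to record the $A^Tb$ contribution separately from the $A^TA$ contribution, even though it is formally dominated by $dn^2$.

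There is no serious obstacle here: the argument is a routine line-by-line accounting once Lemma~\ref{lemma:complexity} is available. The only points that require a little care are (i) confirming the $\mathcal{O}(dn^2)$ cost of the Gram matrix $A^TA$ rather than an accidental smaller estimate, and (ii) being explicit that the scaling operations relating the original and rescaled variables do not dominate, so that the complexity of solving the rescaled problem (Lemma~\ref{lemma:complexity}) carries over unchanged to Algorithm~\ref{algo:NNLS}.
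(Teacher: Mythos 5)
Your proposal is correct and follows essentially the same route as the paper: the paper likewise obtains the theorem by adding the $\mathcal{O}(dn+dn^2)$ cost of forming $A^TA$ and $A^Tb$ to the $\mathcal{O}(\bar{k}nS(n))$ bound from Lemma~\ref{lemma:complexity} for the call to Algorithm~\ref{algo:NQPByExactLineSearch}. Your accounting is in fact slightly more explicit than the paper's (which states the decomposition without itemizing the $\mathcal{O}(n^2)$ normalization and $\mathcal{O}(n)$ rescaling steps), but there is no substantive difference in approach.
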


%
%

\section{Experimental Evaluation}\label{sec:evaluation}

In this section, we investigate the convergence speed, running time  and optimality of the proposed algorithm in comparison to state-of-the-art algorithms belongs different research directions: active-set methods, iterative methods and accelerated methods. Particularly, we compare our algorithm \textbf{Antilop} with the following algorithms:
\begin{itemize}
	\item \textbf{Fast}: This is a modified effective version of active-set methods according to Bro R., de Jong S., Journal of Chemometrics, 1997~\cite{Bro1997}, which is developed by S. Gunn~\footnote{http://web.mit.edu/~mkgray/matlab/svm/fnnls.m}. This algorithm can be considered as one of the fastest algorithms of active-set methods.
	
	\item \textbf{Nm}: This is a non-monotonic fast method for large-scale nonnegative least squares based on iterative methods~\cite{Kim2013}. The source code is downloaded from~\footnote{http://suvrit.de/work/progs/nnls.html}.
	
	\item \textbf{Accer}: This is a Nesterov accelerated method with convergence rate $O(1/k^2)$~\cite{Guan2012}. The source code is extracted from a module in the paper~\cite{Guan2012}. The source code is downloaded from~\footnote{https://sites.google.com/site/nmfsolvers/}.
	
	\item \textbf{Anti+Acc}: This is a Nesterov accelerated method adding the anti-lopsided steps (from Line 2 to Line 4 in Algorithm~\ref{algo:NNLS}). This evaluation investigates the effectiveness of the anti-lopsided steps for algorithms that only use the first derivative.
\end{itemize}

To be a fair comparison, we evaluate the five algorithms on various test-cases which are randomly generated. Particularly, we use a set of test-cases with $n = 4000$ and $d=1.5n=6000$ since usually $d > n$. Furthermore, we design 6 test-cases \{T1, ..., T6\}, which have high potential to happen in practice. To generate test-cases having nontrivial solutions, matrix $A$ and a vector $x^*$ are randomly generated, and $b=Ax^*$. The sign of values in $A$ and $x^*$ can be non-negative($+$) or mixed-sign($\pm$); and measure vectors $\{A_i\}_{i=1}^d$ have the same (SAM), randomized (RAN), or various (VAR) lengths, which are enumerated in Table~\ref{table:testcases}. Test-cases containing $A, x^*$ negative are ignored because they will have $x=0$ as the optimal solution. Concerning the optimal value $f^*$ to evaluate, hence, it is $0$ for nonnegative test cases; and it is the best minimum value found by all the methods  for mixed-sign test cases because we do not know it for the test-cases in advance.

For each test-case, we randomize 5 sub-tests having different sparsity of vectors $A_k$ and $x^*$ from 0\% to 40\% to create the objective functions having different values of Lipschitz constant and various cases of solution (see Table~\ref{table:varity}).

\begin{table}
	\caption{Summery of the kinds of test-cases}
	\label{table:testcases}
	\centering
	{\begin{tabular}{ccccccc} 
			\hline\noalign{\smallskip}
			\ & \textbf{T1} & \textbf{T2} & \textbf{T3} & \textbf{T4} & \textbf{T5} & \textbf{T6} \\
			\noalign{\smallskip}\hline\noalign{\smallskip}
			$A, x^*$ & $+$ & $\pm$ & $+$ & $\pm$ & $+$ & $\pm$ \\ 
			Length of $A_k$ & SAM & RAN & VAR & SAM & RAN & VAR \\
			\noalign{\smallskip}\hline
		\end{tabular}}
	\end{table}
	
	\begin{table}
		\caption{Range of $||A||^2_2$ and $||A^TA||^2_2$ in test cases}
		\label{table:varity}
		\centering
		{\begin{tabular}{lcc} 
				\hline\noalign{\smallskip}
				\ & $||A||_2^2$ & $||A^TA||_2^2$ \\
				\noalign{\smallskip}\hline\noalign{\smallskip}
				T1 & $1.8e$+$03$\ -\ $4.7e$+$06$ & $1.1e$+$05$\ -\ $1.4e$+$13$\\
				T2 & $1.7e$+$03$\ -\ $3.5e$+$06$ & $9.4e$+$04$\ -\ $3.9e$+$11$\\ 
				T3 & $2.4e$+$03$\ -\ $3.1e$+$06$ & $5.6e$+$05$\ -\ $6.0e$+$12$\\ 
				T4 & $2.3e$+$03$\ -\ $3.9e$+$06$ & $1.6e$+$05$\ -\ $7.5e$+$12$\\ 
				T5 & $2.6e$+$03$\ -\ $2.9e$+$06$ & $2.1e$+$05$\ -\ $5.1e$+$12$\\ 
				T6 & $9.9e$+$02$\ -\ $5.0e$+$06$ & $3.1e$+$04$\ -\ $1.5e$+$13$\\ 
				\noalign{\smallskip}\hline
			\end{tabular}}
			
		\end{table}
		
		Since the algorithms can run for a long time, we must stop them when no finding better solution with a smaller value of $||\nabla \bar{f}||^2$, reaching the maximum number of iterations $k = 5n = 2.10^4$, or the maximum of running time is 1600(s) that is the running time of the traditional accurate method Fast~\cite{Bro1997} for all the test-cases.
		
		In addition, we develop Algorithm \ref{algo:NNLS} and Algorithm~\ref{algo:NQPByExactLineSearch} in Matlab to easily compare them with other algorithms. Furthermore, we set system parameters to use only 1 CPU for Matlab and the IO time is excluded. All source codes of our algorithm and generating test-cases, and 30 used test-cases are published ~\footnote{https://bitbucket.org/[will-publish]/antilopsidednnls}.
		
		\subsection{Convergence}
		\begin{figure*}
			\centering{
				\hspace*{-20pt}
				\includegraphics[scale=0.6]{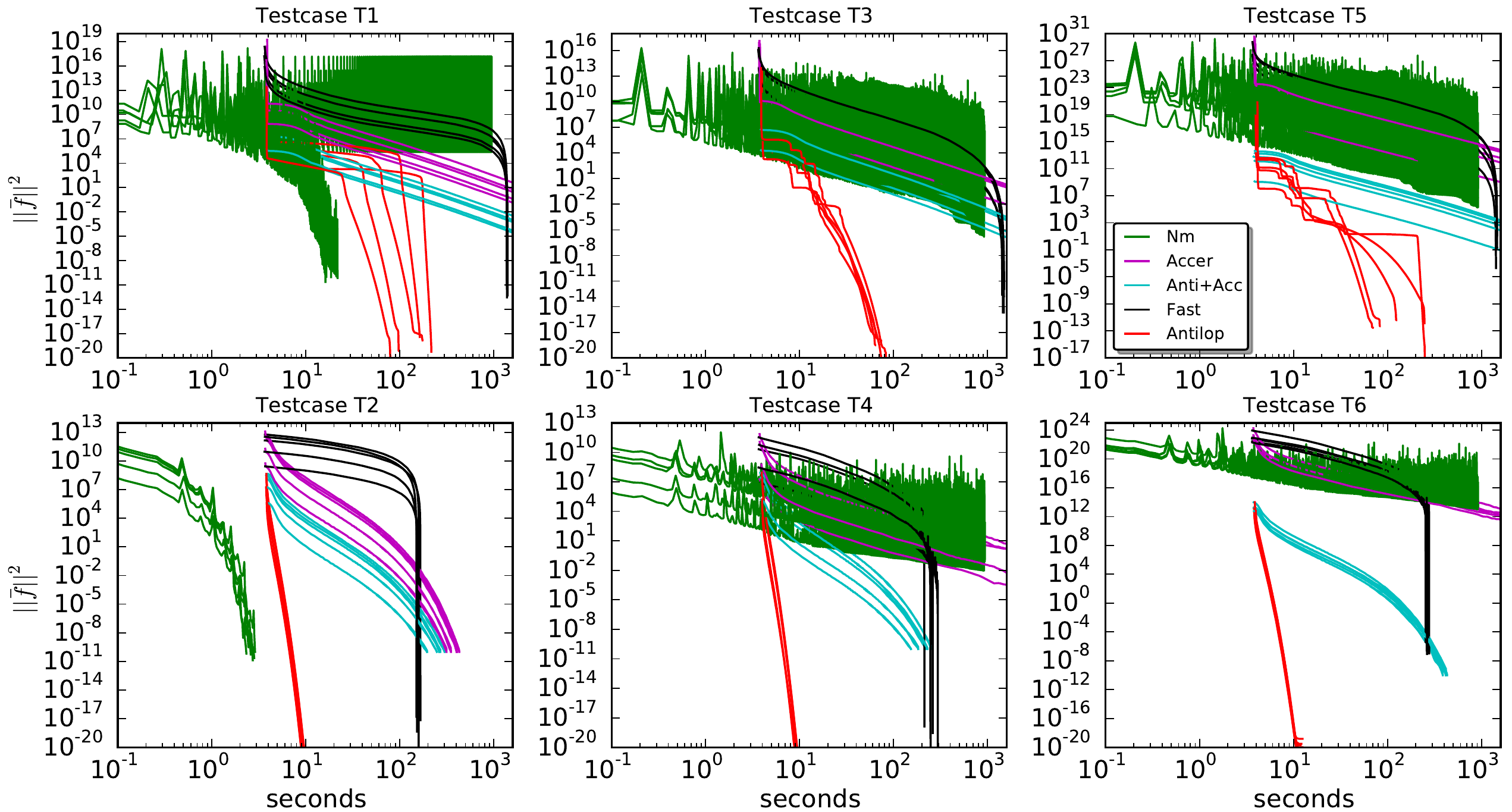}
				\caption{$||\bar{f}||^2_2$ during running time}
				\label{fig:convergence_deri}}
			\vspace*{-10pt}
		\end{figure*}
		
		In this section, we investigate the convergence speed of the square of derivatives $||\bar{f}||^2_2 \rightarrow 0$ (see Figure~\ref{fig:convergence_deri}) and the difference between the values of objective function and the optimal values $(f(x_k) - f^*+1) \rightarrow 10^0$ during the running time (see Figure~\ref{fig:convergence_obj}). The results clearly show that our proposed algorithm and  algorithm Fast~\cite{Bro1997} converge to the optimal values in all 30 test-cases. Remarkably, our algorithm converges to the optimal values much faster than the other methods. Moreover, interestingly, algorithm Anti+Acc works more effectively than algorithm Accer. This proves that the anti-lopsided transformation makes the convergence of iterative methods using the first derivative more faster because the transformation significantly reduced the scaling problem of variables to obtain more appropriate gradient directions.
		
		\begin{figure*}
			\centering{
				\hspace*{-20pt}
				\includegraphics[scale=0.6]{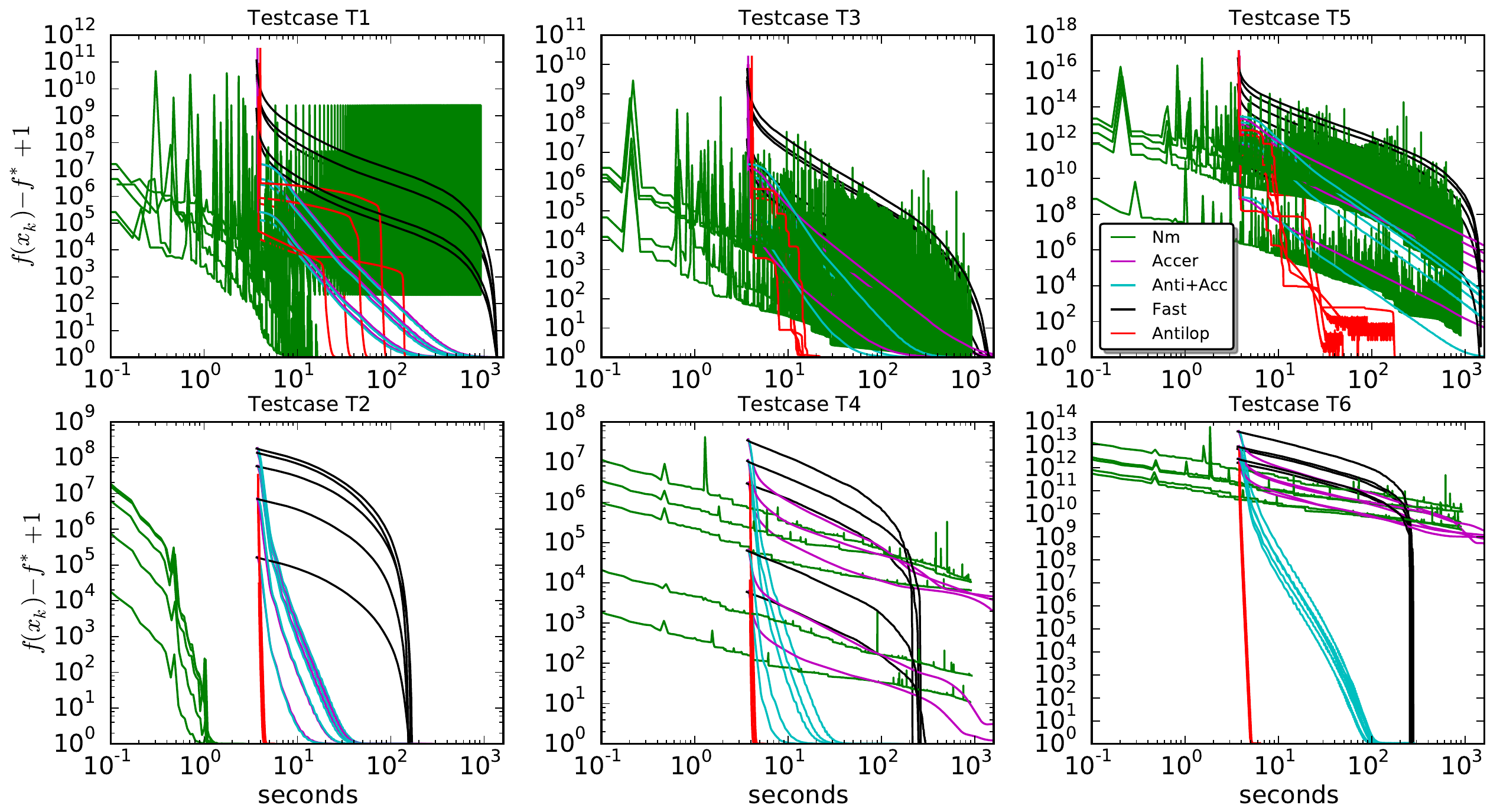}
				\caption{$(f(x_k) - f^* + 1)$ during running time}
				\label{fig:convergence_obj}}
			\vspace*{-10pt}
		\end{figure*}
		
		\subsection{Running Time}
		
		\begin{table}
			\caption{Average of running time in seconds for test-cases $T_1,...,T_6$}
			\label{table:running_time}       
			\centering
			\begin{tabular}{lccccc}
				\hline\noalign{\smallskip}
				Test-case  & \textbf{Antilop} &  \textbf{Fast} &  \textbf{Nm} &  \textbf{Accer} &\textbf{Anti+Acc}\\ 
				\noalign{\smallskip}\hline\noalign{\smallskip}
				T1 &  \textbf{146.8} &  1384.2 &  205.2 &  1600.0 &  1600.0 \\
				T2 &  11.9 &  160.6 &  \textbf{2.7} &  253.4 &  166.7 \\
				T3 &  \textbf{71.5} &  1471.9 &  917.6 &  1600.0 &  1600.0 \\ 
				T4 &  \textbf{12.5} &  255.6 &  906.4 &  1600.0 &  138.5 \\ 
				T5 &  \textbf{154.7} &  1455.3 &  905.3 &  1600.0 &  1600.0 \\ 
				T6 &  \textbf{12.3} &  265.7 &  906.9 &  1600.0 &  276.2 \\ 
				\noalign{\smallskip}\hline
			\end{tabular}
		\end{table}
		
		In the final comparison, we compare the algorithms in the average of running time on 5 sub-tests of test-cases as shown in Table \ref{table:running_time}.  Clearly, our proposed algorithm has the best averages of running time in most of test-cases, excepted for test-case T$2$. However, the non-monotonic algorithm (Nm)~\cite{Kim2013} is unstable in other kinds of test-cases. Furthermore, the difference between our proposed algorithm and the non-monotonic algorithm for test-case T$2$  is not considerable. In comparison to the most stable algorithm Fast~\cite{Bro1997}, our algorithm has the similar accuracy of optimal values, but it runs from 9 times to 21 times faster than algorithm Fast~\cite{Bro1997}.
		
		\subsection{Optimality}
		
		\begin{table}
			\caption{Average of $|f(x_k) - f^*|$ in test-cases $T_1,...,T_6$}
			\label{table:optimality}
			\centering
			{\begin{tabular}{lccccc}
					\hline\noalign{\smallskip}
					Test-case & \textbf{Antilop} &  \textbf{Fast} &  \textbf{Nm} &  \textbf{Accer} &  \textbf{Anti+Acc} \\
					\noalign{\smallskip}\hline\noalign{\smallskip}
					T1 &  7E-15 &  \textbf{2E-15} &  2E+03 &  2E-03 &  2E-03 \\ 
					T2 &  \textbf{6E-08} &  7E-08 &  6E-08 &  7E-08 &  1E-07 \\ 
					T3 &  6E-16 &  \textbf{2E-16} &  2E-01 &  1E-01 &  2E-04 \\ 
					T4 &  9E-09 &  8E-09 &  6E+03 &  2E+03 &  \textbf{1E-10} \\ 
					T5 &  3E-09 &  \textbf{9E-10} &  5E+05 &  5E+05 &  1E+03 \\ 
					T6 &  6E-03 &  4E-03 &  7E+09 &  1E+09 &  \textbf{6E-04} \\ 
					\noalign{\smallskip}\hline
				\end{tabular}}
			\end{table}
			
			This section is dedicated to investigate the optimal effectiveness of the compared algorithms. The results shown in Table~\ref{table:optimality} are the average values $|f(x_k)-f^*|$ of 5 sub-test cases for 6 test-cases. Obviously, our algorithm and Fast~\cite{Bro1997} best values that are highly distinguished from the other methods. All the methods have errors $|f(x_k) - f^*|$ that can be raised by approximately computing float numbers in computers and characteristics of iterative methods. Hence, the minor differences between the proposed algorithm's results and the best results can be acceptable for the size of large matrix $d \times n$, and the big values of Lipschitz constant related to $||A||_2$.

			\section{Conclusion and Discussion}\label{sec:conclusion}
			
			In the paper, we proposed a fast robust anti-lopsided algorithm to solve the nonnegative least squares problem that is one of the most fundamental problem in data modeling. We theoretically proved that our algorithm has linear convergence $(1-\frac{1}{2||Q||_2})^k$ on the sub-space of passive variables, where $\sqrt{n} \leq ||Q||_2 \leq n$, and $n$ is the dimension of solutions. 
			
			In addition, we carefully compare the proposed algorithm with state-of-the-art algorithms in different major research directions on large matrices about three aspects: convergence rate, running time and optimality of solutions on 30 randomized tests among 6 different test-cases which often occur in practice. On convergence rate and running time, the proposed algorithm's results are highly distinct from other ones' results. On the optimality of solutions, the results of our algorithm are very close to the most accurate results. These differences are not significant and inevitable because they are raised by computing approximately float numbers in computers and the characteristics of iterative methods using many float operators on the large-scale matrices. Therefore, it can be said that the proposed algorithm obtains all of three significant aspects on convergence speed, running time and accuracy.
			
			Finally, the convergence speed of the Nesterov accelerated method~\cite{Guan2012} using the anti-lopsided steps is much more faster than Nesterov accelerated method directly applied. Hence, we strongly believe that  the anti-lopsided steps can have a significant impact on iterative methods using the first derivative for solving least squares problems and related quadratic programming problems.

\section{Acknowledgements}

This work was supported by 322 Scholarship from Vietnam Ministry of Education and Training; and Asian Office of Aerospace R\&D under agreement number FA2386-13-1-4046.

\bibliography{NNLSRef}   

\end{document}